\documentclass[10pt,a4paper]{amsart}
\usepackage[utf8]{inputenc}
\usepackage[T1]{fontenc}
\usepackage{amsmath}
\usepackage{amssymb}
\usepackage{graphicx}
\usepackage{hyperref}
\usepackage{cleveref}
\usepackage{tikz-cd}
\usepackage[margin=3.6cm]{geometry}

\newtheorem{thm}{Theorem}
\newtheorem{cor}[thm]{Corollary}
\newtheorem{lem}[thm]{Lemma}
\newtheorem{prop}[thm]{Proposition}
\theoremstyle{definition}
\newtheorem{rem}[thm]{Remark}

\title{Universal covers of non-negatively curved manifolds and formality}

\begin{document}
 \author{Aleksandar Milivojevi\'c}%\thanks{University of Waterloo, Faculty of Mathematics, \texttt{amilivoj@uwaterloo.ca}}
 \address{University of Waterloo, Faculty of Mathematics}
 \email{amilivoj@uwaterloo.ca}
 \subjclass[2020]{55P62, 57R19, 53C25}
\keywords{Formality, rational homotopy theory, non-negative curvature}
 
 \begin{abstract} We show that if the universal cover of a closed smooth manifold admitting a metric with non-negative Ricci curvature is formal, then the manifold itself is formal. We reprove a result of Fiorenza--Kawai--L\^{e}--Schwachh\"ofer, that closed orientable manifolds with a non-negative Ricci curvature metric and sufficiently large first Betti number are formal. Our method allows us to remove the orientability hypothesis; we further address some cases of non-closed manifolds.\end{abstract}

\maketitle

%\setcounter{tocdepth}{1}
%\tableofcontents

A long-standing theme in the study of smooth manifolds is to find topological obstructions to the existence of certain geometric structures. A cornerstone result is that of Deligne--Griffiths--Morgan--Sullivan \cite{DGMS75}, that  manifolds admitting a complex structure satisfying the $\partial \overline{\partial}$-lemma are formal. That is, the (weak) homotopy type of its commutative differential graded algebra of de Rham forms is that of its de Rham cohomology algebra equipped with trivial differential. 

Certain such obstructions behave well under non-zero degree maps of closed orientable manifolds. For example, as shown by Taylor, non-trivial triple Massey products pull back non-trivially under non-zero degree maps; Crowley--N\"ordstrom's Bianchi--Massey tensor \cite{CN20} exhibits the same behavior, as does non-formality itself \cite{MSZ23}.

In this note, we will observe that using the Cheeger--Gromoll splitting theorem, one can employ formality as an obstruction to non-negative curvature which is preserved under potentially infinite-degree maps:

\begin{thm}\label{universal} A connected closed manifold admitting a metric with non-negative Ricci curvature is formal if its universal covering space is formal. \end{thm}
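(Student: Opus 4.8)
The plan is to use the Cheeger--Gromoll splitting theorem to pass to a finite cover of $M$ which is a product of a torus with a simply connected closed manifold, observe that this cover is formal, and then descend formality back to $M$ by an averaging argument. Concretely: since $M$ is closed with $\mathrm{Ric}\geq 0$, iterating the splitting theorem gives an isometry $\widetilde M\cong\mathbb R^{k}\times C$ with $C$ closed, simply connected, and having no flat de Rham factor. The de Rham decomposition of $\widetilde M$ is canonical, so $\mathrm{Isom}(\widetilde M)=\mathrm{Isom}(\mathbb R^{k})\times\mathrm{Isom}(C)$ and the deck action exhibits $\pi_1(M)$ as a subgroup of this product; by Cheeger--Gromoll its image in $\mathrm{Isom}(\mathbb R^{k})$ is crystallographic. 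Passing to a suitable finite-index normal subgroup $\Gamma\trianglelefteq\pi_1(M)$ — arranged using the Cheeger--Gromoll description of $\pi_1(M)$ — the cover $\widehat M:=\widetilde M/\Gamma$ is a flat $C$-bundle over a torus $T^{k}$; since its monodromy $\mathbb Z^{k}\to\mathrm{Isom}(C)$ has image in a torus, that homomorphism extends over $\mathbb R^{k}$, and twisting by the extension trivialises the bundle, giving a diffeomorphism $\widehat M\cong T^{k}\times C$. (This product-cover statement is standard; I would cite rather than reprove it.)

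Next, $\widehat M$ is formal: projection onto the second factor is a homotopy equivalence $\widetilde M\simeq C$, so $C$ is formal by hypothesis; the torus $T^{k}$ is formal; and a finite product of formal spaces is formal (tensor the formalising zig-zags and use K\"unneth). It remains to descend formality along the finite regular covering $\widehat M\to M$.

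For this, write $G=\pi_1(M)/\Gamma$, a finite group acting freely by diffeomorphisms on $\widehat M$ with $\widehat M/G=M$; then there are genuine equalities $\Omega(M)=\Omega(\widehat M)^{G}$ of de Rham algebras and $H^{*}(M;\mathbb R)=H^{*}(\widehat M;\mathbb R)^{G}$. I would run the minimal-model and formalising constructions for $\widehat M$ $G$-equivariantly, obtaining a minimal model $(\Lambda V,d)\xrightarrow{\ \sim\ }\Omega(\widehat M)$ carrying a compatible $G$-action together with a $G$-equivariant quasi-isomorphism $(\Lambda V,d)\xrightarrow{\ \sim\ }H^{*}(\widehat M;\mathbb R)$. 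Both can be done equivariantly because at each inductive stage the choice required is a $G$-equivariant splitting of a surjection of $\mathbb R[G]$-modules, which exists since $\mathbb R[G]$ is semisimple; equivalently, the Halperin--Stasheff obstructions to formality are canonically attached to the $G$-algebra $H^{*}(\widehat M;\mathbb R)$, hence $G$-invariant, and they vanish exactly because $\widehat M$ is formal. Applying the exact functor $(-)^{G}$ to these quasi-isomorphisms produces a zig-zag $\Omega(M)=\Omega(\widehat M)^{G}\simeq H^{*}(\widehat M;\mathbb R)^{G}=H^{*}(M;\mathbb R)$, so $M$ is formal.

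The geometric input above is classical, so the heart of the matter is the last step: formality descends along a finite regular covering over a field of characteristic zero. This is an averaging/semisimplicity phenomenon — the same mechanism that makes formality insensitive to extension of the ground field — and it is the only place non-negative Ricci curvature is used, solely through the virtual product structure of $\widetilde M$ furnished by Cheeger--Gromoll. In particular no orientation hypothesis is needed, which is what lets one drop it from the Fiorenza--Kawai--L\^{e}--Schwachh\"ofer statement; and replacing the compact splitting theorem by its non-compact analogue should yield the open cases mentioned in the abstract.
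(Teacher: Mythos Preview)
Your geometric setup matches the paper's exactly: both invoke the Cheeger--Gromoll structure theorem to obtain a finite cover diffeomorphic to $T^{k}\times C$ with $C$ closed simply connected, and both observe that formality of this cover is equivalent to formality of $C$, hence of the universal cover. The divergence is in how formality is pushed down along the finite cover.

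The paper does \emph{not} use your equivariant/averaging argument. Instead it invokes the domination theorem of \cite{MSZ23}: a non-zero degree map between closed orientable manifolds transfers formality to the target. This handles orientable $M$ directly; for non-orientable $M$ the paper passes to the orientable double cover $M'$ (same universal cover), concludes $M'$ is formal, and then applies a separate lemma (\Cref{doublecoverlemma}, proved via the orientation line bundle and Borel--Moore homology) to descend to $M$. Your route is more uniform---no orientability case split, no line-bundle trick---and is in spirit the ``obvious'' approach.

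The paper is aware of your approach and discusses it explicitly in its closing remark: one can always arrange the finite cover to be Galois (as you do), and then one wants ``$X$ formal with finite $G$-action $\Rightarrow$ $X/G$ formal''. The paper's point is that the only argument for this in the literature, not already relying on \cite{MSZ23}, is precisely the construction of a $G$-equivariant formalising zig-zag, and the existing references (\cite{Pa82}, \cite{Op84}, \cite[Remark 3.30]{FOT08}) either assume $b_1=0$, or ``nice'' (globally finitely generated) minimal models, or assert the general case without proof. Your sketch via semisimplicity of $\mathbb{R}[G]$ and Halperin--Stasheff obstructions is the right idea and almost certainly works, but the paper deliberately avoids it ``for the sake of completeness'', preferring to cite the fully documented domination result. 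So: your proof is correct in outline, but the step you call ``the heart of the matter'' is exactly the one the paper flags as not cleanly available off the shelf.
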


%\vspace{1em}

Before proving this, we record the relevant parts of the statements of the Gromoll--Cheeger splitting theorem, and the formal domination theorem of \cite{MSZ23}, that we will use:

\begin{thm}(\cite{CG71}, \cite{CG72}, see formulation in \cite[Corollary 6.67, (b) and (c)]{Be07})
Let $(M,g)$ be a closed connected Riemannian manifold with non-negative Ricci curvature. Then a finite covering of $M$ is diffeomorphic to $\hat{M} \times T^q$, where $\hat{M}$ is a closed simply connected manifold, and $T^q$ is a torus of some dimension $q$. \end{thm}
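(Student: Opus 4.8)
The plan is to carry out the classical Cheeger--Gromoll structure argument. First I would pass to the universal cover $\widetilde{M}$, a complete manifold with $\mathrm{Ric}\geq 0$ on which $\Gamma:=\pi_1(M)$ acts freely, properly discontinuously, cocompactly and by isometries. Applying the Cheeger--Gromoll line-splitting theorem (a complete manifold with $\mathrm{Ric}\geq 0$ that contains a line is isometric to a Riemannian product with an $\mathbb{R}$-factor) finitely many times, I would write $\widetilde{M}\cong N\times\mathbb{R}^q$ isometrically with $N$ complete, $\mathrm{Ric}\geq 0$, and containing no line; since $\widetilde{M}$ and $\mathbb{R}^q$ are simply connected, so is $N$.

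Since $N$ contains no line it has no flat de Rham factor, so the de Rham decomposition theorem identifies $\mathbb{R}^q$ as the full Euclidean factor of $\widetilde{M}=N\times\mathbb{R}^q$, and because isometries respect the de Rham decomposition we get $\mathrm{Isom}(\widetilde{M})=\mathrm{Isom}(N)\times\mathrm{Isom}(\mathbb{R}^q)$. Writing the $\Gamma$-action accordingly as $\gamma\mapsto(\phi(\gamma),\rho(\gamma))$ with $\phi\colon\Gamma\to\mathrm{Isom}(N)$ and $\rho\colon\Gamma\to\mathrm{Isom}(\mathbb{R}^q)=O(q)\ltimes\mathbb{R}^q$, the group $\phi(\Gamma)$ acts cocompactly on $N$ (project a compact $F\subseteq\widetilde{M}$ with $\Gamma\cdot F=\widetilde{M}$ to the $N$-factor). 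Here is the first substantive point: $N$ is \emph{compact}. If not, $N$ contains a ray $\sigma\colon[0,\infty)\to N$; translating each minimizing segment $\sigma|_{[0,2n]}$ by an isometry so that its midpoint lands in a fixed compact set, an Arzelà--Ascoli argument (a limit of minimizing segments is minimizing) produces a line in $N$, a contradiction. So $N$ is compact, and I set $\widehat{M}:=N$. Moreover $\ker\rho$ consists of elements acting as $(\text{an isometry of }N,\ \mathrm{id}_{\mathbb{R}^q})$, hence preserving each slice $N\times\{v\}$; as $N$ is compact and the action is properly discontinuous, $\ker\rho$ is finite, hence trivial since $\Gamma$ is torsion-free. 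Thus $\rho$ is injective, and, using compactness of $N$ and of $\mathrm{Isom}(N)$, its image $\rho(\Gamma)$ is a discrete cocompact subgroup of $\mathrm{Isom}(\mathbb{R}^q)$, i.e.\ a crystallographic group.

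Next I would invoke Bieberbach's first theorem: the translation lattice $\rho(\Gamma)\cap\mathbb{R}^q\cong\mathbb{Z}^q$ has finite index in $\rho(\Gamma)$, so $\Gamma_1:=\rho^{-1}(\rho(\Gamma)\cap\mathbb{R}^q)$ has finite index in $\Gamma$ and, via $\rho$, is a copy of $\mathbb{Z}^q$ acting on $\mathbb{R}^q$ by translations. Its image $A:=\phi(\Gamma_1)\subseteq\mathrm{Isom}(N)$ is a finitely generated abelian group whose closure $\overline{A}$ is a compact abelian Lie group, so $\Gamma_2:=\ker\bigl(\Gamma_1\xrightarrow{\phi}\overline{A}\to\overline{A}/\overline{A}_0\bigr)$ has finite index in $\Gamma$ and $\phi(\Gamma_2)$ lies in the torus $T^m:=\overline{A}_0$. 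Thus $\Gamma_2\cong\mathbb{Z}^q$ acts on $N\times\mathbb{R}^q$ by $\delta\cdot(x,v)=(\mu(\delta)\cdot x,\ v+\ell(\delta))$, where $\ell\colon\Gamma_2\hookrightarrow\mathbb{R}^q$ is a lattice embedding and $\mu\colon\Gamma_2\to T^m$ a homomorphism. Identifying $\ell(\Gamma_2)$ with $\mathbb{Z}^q$, the monodromy $\mu\colon\mathbb{Z}^q\to T^m$ extends to a Lie-group homomorphism $\psi\colon\mathbb{R}^q\to T^m$ (lift $\mu$ on a basis into $\mathrm{Lie}(T^m)$ and extend linearly), and then $\Phi(x,v):=(\psi(v)^{-1}\cdot x,\ v)$ is a diffeomorphism of $N\times\mathbb{R}^q$ intertwining the $\Gamma_2$-action with the action of $\mathbb{Z}^q$ by translations on $\mathbb{R}^q$ alone. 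Passing to quotients, the finite cover $\widetilde{M}/\Gamma_2\to M$ becomes diffeomorphic to $N\times(\mathbb{R}^q/\mathbb{Z}^q)=\widehat{M}\times T^q$ with $\widehat{M}$ closed and simply connected, which is the claim.

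I expect the two main obstacles to be, first, the compactness of $N$, which rests on the limiting argument extracting a line out of cocompactness together with the standard but slightly delicate facts that limits of minimizing segments are minimizing and that $N$ is locally compact; and second, the final untwisting, since one cannot expect $\Gamma_1$ itself to act trivially on $N$: the point is to descend to the subgroup $\Gamma_2$ whose $N$-monodromy lands in the \emph{connected} group $T^m$, and then to absorb that monodromy through the explicit diffeomorphism $\Phi$ built from $\psi$. The inputs I would take as black boxes are the Cheeger--Gromoll line-splitting theorem, the de Rham decomposition theorem with the resulting splitting of the isometry group, and Bieberbach's first theorem on crystallographic groups.
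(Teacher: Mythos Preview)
The paper does not prove this statement: it is quoted as a known theorem of Cheeger--Gromoll (with a pointer to the formulation in Besse) and used as a black box. So there is no proof in the paper to compare against. That said, your outline follows the classical Cheeger--Gromoll structure argument and is largely sound, with one genuine gap.

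The gap is the assertion that $\Gamma=\pi_1(M)$ is torsion-free, which you use to conclude that the finite group $\ker\rho$ is trivial and hence that $\rho$ is injective. This is false in general: take $M=\mathbb{RP}^3$ (or any spherical space form) with the round metric; then $\mathrm{Ric}>0$, $q=0$, $N=S^3$, and $\ker\rho=\Gamma$ is nontrivial finite. The error propagates: you use $\Gamma_1\cong\mathbb{Z}^q$ to conclude that $A=\phi(\Gamma_1)$ is abelian, so that $\overline{A}_0$ is a torus, which is precisely what makes the untwisting via $\psi$ work. Without injectivity of $\rho$ you only know that $\Gamma_1$ sits in an extension $1\to\ker\rho\to\Gamma_1\to\mathbb{Z}^q\to 1$ with $\ker\rho$ finite, and such a group need not be abelian.

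The repair is short but not the one-liner you wrote. One route: pass to the finite-index centralizer $C=C_{\Gamma_1}(\ker\rho)$, which sits in a \emph{central} extension $1\to Z(\ker\rho)\to C\to\mathbb{Z}^q\to 1$; choosing lifts $c_1,\dots,c_q$ of a basis, the commutators $[c_i,c_j]\in Z(\ker\rho)$ have order dividing $n:=|Z(\ker\rho)|$, so $c_1^n,\dots,c_q^n$ commute and generate an abelian group of finite index containing a rank-$q$ free abelian subgroup. With that genuine $\mathbb{Z}^q$ in hand (on which $\rho$ is automatically injective, the subgroup being torsion-free), your passage to $\Gamma_2$ and the untwisting by $\Phi$ go through as written.
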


\begin{thm}\label{formaldomination}(\cite[Theorem A]{MSZ23}) Suppose $Y \xrightarrow{f} X$ is a proper smooth map between smooth orientable manifolds which contains some (hence any) rational Borel--Moore fundamental class of $X$ in the image of the induced map on rational Borel--Moore homology. Then, if $Y$ is formal, $X$ is also formal. In particular, if $Y$ and $X$ are closed orientable manifolds and $f$ is a non-zero degree map, then formality of $Y$ implies formality of $X$. \end{thm}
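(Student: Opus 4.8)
Throughout, ``formal'' refers to formality of the associated CDGA of ($\mathbb{Q}$-polynomial or de Rham) forms, so the plan is to pass to CDGAs at once; recall that a connected CDGA $A$ is formal precisely when its minimal Sullivan model $(\Lambda V,d)$ admits a CDGA morphism to $(H^{*}(A;\mathbb{Q}),0)$ inducing the identity on cohomology. Write $n=\dim Y$, $m=\dim X$. The morphism $f^{*}\colon\Omega^{*}(X)\to\Omega^{*}(Y)$ is a CDGA morphism, and the first step is to extract from the hypothesis a one-sided inverse on cohomology. Properness of $f$ makes $f_{*}\colon H^{BM}_{*}(Y;\mathbb{Q})\to H^{BM}_{*}(X;\mathbb{Q})$ defined, and Poincar\'e--Lefschetz duality for oriented (possibly non-compact, boundaryless) manifolds identifies $H^{BM}_{k}(Z;\mathbb{Q})\cong H^{\dim Z-k}(Z;\mathbb{Q})$. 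Under these identifications, the assumption that a Borel--Moore fundamental class of $X$ lies in the image of $f_{*}$ becomes a class $\omega\in H^{n-m}(Y;\mathbb{Q})$ (in particular forcing $n\ge m$) together with an Umkehr map $f_{!}\colon H^{*}(Y;\mathbb{Q})\to H^{*-(n-m)}(X;\mathbb{Q})$ with $f_{!}(\omega)=1$, satisfying the projection formula $f_{!}(f^{*}\alpha\cup\beta)=\alpha\cup f_{!}(\beta)$. Hence $\alpha\mapsto f_{!}(\omega\cup f^{*}\alpha)$ is the identity, so $f^{*}$ is split injective on cohomology, the splitting being a morphism of $H^{*}(X;\mathbb{Q})$-modules.

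Next I would feed in formality of $Y$. Fix a zig-zag of CDGA quasi-isomorphisms $\Omega^{*}(Y)\simeq(H^{*}(Y;\mathbb{Q}),0)$ and pass to minimal Sullivan models, obtaining a morphism $\phi\colon\mathcal{M}_{X}\to\mathcal{M}_{Y}$ lifting $f^{*}$ and, after composing with the formality quasi-isomorphism, a CDGA morphism $\psi\colon\mathcal{M}_{X}\to(H^{*}(Y;\mathbb{Q}),0)$ whose effect on cohomology is $f^{*}|_{H^{*}(X;\mathbb{Q})}$, which is injective by the previous paragraph. The remaining task is to upgrade ``$\mathcal{M}_{X}$ admits a cohomology-injective morphism to a formal CDGA, compatibly with the transfer splitting'' into an honest formalizing morphism $\mathcal{M}_{X}\to(H^{*}(X;\mathbb{Q}),0)$. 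The naive route --- postcomposing $\psi$ with an algebra retraction of $H^{*}(Y;\mathbb{Q})$ onto the image of $H^{*}(X;\mathbb{Q})$ --- fails, since that image need not be an algebra retract (already for the degree-one collapse $\mathbb{CP}^{2}\to S^{4}$). Instead I would track the multiplicative structure via the transferred $C_{\infty}$-structure on $H^{*}(X;\mathbb{Q})$ coming from Kadeishvili's theorem: formality of $X$ is the assertion that this structure is gauge-equivalent to its underlying graded algebra. Formality of $Y$ says the analogous $C_{\infty}$-structure on $H^{*}(Y;\mathbb{Q})$ is already gauge-trivial; $\psi$ together with a cocycle section realizes $H^{*}(X;\mathbb{Q})$ as a $C_{\infty}$-substructure, and one wants the transfer --- realized at the (co)chain level --- to let one restrict the trivializing gauge.

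The step I expect to be the main obstacle is exactly this last one: the transfer $f_{!}$ is a priori defined only on cohomology, and even once lifted to cochains it is naturally only a morphism of modules over $\Omega^{*}(X)$, not of algebras, and of the wrong degree to be an $A_{\infty}$-algebra morphism, so it does not by itself see the structure that formality is about. To make it interact with products I would first treat proper submersions with oriented fibers, where fiberwise integration $\int_{f}\colon\Omega^{*}_{cv}(Y)\to\Omega^{*-(n-m)}(X)$ satisfies the projection formula on the nose and supplies the needed chain-level data; for a general proper $f$ I would factor it as a closed Whitney embedding $Y\hookrightarrow X\times\mathbb{R}^{N}$ followed by the projection to $X$, replace $\Omega^{*}(Y)$ by forms with compact vertical support twisted by a closed Thom form $\tau$ of the normal bundle supported in a tubular neighborhood, and organize the coherent failures of multiplicativity of $\alpha\mapsto\int(\tau\wedge\widetilde{\alpha})$ into the structure maps of a suitable $\infty$-morphism retracting $f^{*}$ up to homotopy. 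Granting this, the previous paragraph yields formality of $X$. Finally, if $Y$ and $X$ are closed oriented of the same dimension and $\deg f\neq 0$, then $f_{*}[Y]=(\deg f)[X]$, so the hypothesis is satisfied and formality of $Y$ implies formality of $X$.
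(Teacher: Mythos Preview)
The paper does not contain a proof of this statement: it is quoted as \cite[Theorem~A]{MSZ23} and used as a black box throughout (in the proof of \Cref{doublecoverlemma}, in the proof of \Cref{universal}, and implicitly elsewhere). There is therefore no argument in the present paper to compare your attempt against.

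On the substance of your sketch: the first paragraph is correct and standard --- Poincar\'e--Lefschetz duality plus the projection formula for the Umkehr map give the $H^{*}(X;\mathbb{Q})$-linear splitting of $f^{*}$, and this is indeed the starting point of \cite{MSZ23}. You also correctly identify where the real work lies, and you are honest that your third paragraph only gestures at it (``granting this''). That step --- promoting the cohomology-level module retraction to something that interacts with products well enough to transport formality --- is the entire content of the theorem, and your plan of realizing a chain-level transfer via Thom forms and fiber integration and then packaging the defects of multiplicativity into a $C_{\infty}$-retraction is a reasonable strategy in the spirit of \cite{MSZ23}, but as written it remains a plan rather than a proof. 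In particular, the assertion that the coherent failures of multiplicativity of $\alpha\mapsto\int(\tau\wedge\widetilde{\alpha})$ assemble into an honest $C_{\infty}$-morphism retracting $f^{*}$ needs a careful argument, not just an expectation; this is precisely the technical heart that \cite{MSZ23} supplies.
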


As a consequence of \Cref{formaldomination}, we have the following useful lemma:

\begin{lem}\label{doublecoverlemma} If the orientable double cover $M'$ of a smooth non-orientable (not necessarily compact) manifold $M$ is formal, then $M$ is formal as well. \end{lem}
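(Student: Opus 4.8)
The plan is to realize the de Rham algebra of $M$ as a retract, in a homotopically meaningful sense, of the de Rham algebra of $M'$, and then to run the algebraic mechanism that underlies \Cref{formaldomination}. First I would set up the covering-space algebra. Write $p\colon M'\to M$ for the orientable double cover, let $\tau$ be the non-trivial deck transformation — a free smooth involution of $M'$ with $p\circ\tau=p$ and, since $M$ is non-orientable, orientation-reversing — and put $M=M'/\langle\tau\rangle$. Pullback gives an injection of commutative differential graded algebras $p^*\colon\Omega^*(M)\hookrightarrow\Omega^*(M')$ whose image is exactly the subalgebra $\Omega^*(M')^{\tau}$ of $\tau$-invariant forms. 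Working over $\mathbb{R}$ (for which $\mathbb{R}$- and $\mathbb{Q}$-formality coincide), the averaging operator $\Pi:=\tfrac12(\mathrm{id}+\tau^*)$ is a chain map $\Omega^*(M')\to\Omega^*(M')$ with image $\Omega^*(M')^{\tau}\cong\Omega^*(M)$, it is $\Omega^*(M)$-linear (a short check using $\tau$-invariance of forms pulled back from $M$), and it satisfies $\Pi\circ p^*=\mathrm{id}$. Thus $p^*$ exhibits $\Omega^*(M)$ as a retract of $\Omega^*(M')$ in the category of $\Omega^*(M)$-dg-modules, the retraction being an honest chain map (though not an algebra map). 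None of this uses closedness or orientability of $M$, which is why the non-compact and non-orientable cases are handled uniformly.

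This is precisely the kind of input that drives \Cref{formaldomination}: the proof of that theorem produces, from the hypothesis that a fundamental class of $X$ lies in the image of $f_*$, exactly such a module-level chain retraction of $f^*\colon\Omega^*(X)\to\Omega^*(Y)$ (via Poincar\'e--Lefschetz duality and the cap product), and then concludes formality of $X$ from that of $Y$ by a purely algebraic statement: \emph{a commutative differential graded algebra that is a retract, through chain maps of modules over itself, of a formal one is again formal}. Here the retraction is already at hand, so the same conclusion applies. Concretely, I would phrase the algebra through invariants: choose a $\mathbb{Z}/2$-equivariant minimal Sullivan model $\rho\colon(\Lambda V,d)\xrightarrow{\simeq}\Omega^*(M')$, which exists in characteristic zero because at each stage of the construction the relevant cohomology groups carry $\mathbb{Z}/2$-representations admitting equivariant splittings; since taking $\mathbb{Z}/2$-invariants over $\mathbb{R}$ is exact (it is a direct summand of the identity via $\Pi$), the invariant subalgebra $(\Lambda V,d)^{\tau}$ is a model for $\Omega^*(M')^{\tau}\cong\Omega^*(M)$. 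It then suffices to produce a $\mathbb{Z}/2$-equivariant formality quasi-isomorphism $(\Lambda V,d)\to(H^*(M';\mathbb{R}),0)$ out of the (non-equivariant) formality of $M'$; restricting it to invariants yields a formality quasi-isomorphism $(\Lambda V,d)^{\tau}\to(H^*(M';\mathbb{R})^{\tau},0)=(H^*(M;\mathbb{R}),0)$, so $M$ is formal.

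The step I expect to be the main obstacle is exactly this last one — upgrading a non-equivariant formality quasi-isomorphism for $\Omega^*(M')$ to a $\mathbb{Z}/2$-equivariant one (equivalently, proving the algebraic lemma that a self-module retract of a formal CDGA is formal). This is where characteristic zero is essential: Maschke's theorem lets one average, split off invariants, and carry out the Halperin--Stasheff-type obstruction theory for formality $\mathbb{Z}/2$-equivariantly, the obstruction classes at each stage living in $\mathbb{Z}/2$-representations that split off their invariant summands. It is the only place where the non-orientability of $M$ could plausibly interfere, so I would isolate it as a self-contained equivariance lemma and dispatch it there.
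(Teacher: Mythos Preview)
Your approach is correct in outline but takes a genuinely different route from the paper. You work directly with the algebra: the averaging operator $\Pi$ exhibits $\Omega^*(M)$ as an $\Omega^*(M)$-dg-module retract of $\Omega^*(M')$, and you then invoke the algebraic heart of \cite{MSZ23} (a CDGA that is such a module retract of a formal CDGA is itself formal). The paper instead keeps \Cref{formaldomination} as a black box and manufactures an \emph{orientable} situation to which it applies: it replaces $M$ by the total space of its orientation line bundle and $M'$ by the total space of the pullback (trivial) line bundle; both total spaces are orientable, the induced double cover between them is proper and surjective on top Borel--Moore homology, and formality of a vector-bundle total space is equivalent to that of the base. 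Your route is more direct and explains why the orientability hypothesis in \Cref{formaldomination} is inessential here; the paper's route has the virtue of not opening up \cite{MSZ23} at all.

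One caution. In your second and third paragraphs you propose to \emph{justify} the module-retract step by promoting the formality of $M'$ to a $\mathbb{Z}/2$-equivariant formality and then passing to invariants. That is exactly the step the paper's closing Remark singles out as not fully established in the literature in the generality needed (Papadima's and Oprea's results cover restricted classes of spaces, with the general extension only asserted). So either cite the algebraic retract statement directly from \cite{MSZ23} --- it is the mechanism behind their Theorem~A and also underlies their Theorem~B, invoked later in the proof of \Cref{firstbetti} --- or actually carry out the equivariant obstruction-theory argument rather than deferring it to a lemma you do not prove. Mixing the two routes, as your final paragraph does by calling them ``equivalent,'' risks leaving the proof resting on the very gap the paper is at pains to avoid.
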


\begin{proof} Consider the total space of the non-orientable real line bundle over $M$ with first Stiefel--Whitney class $w_1$ equal to $w_1(TM)$ (i.e., the orientation line bundle). This is an orientable manifold which is doubly covered by the pullback of the line bundle over the orientable double cover $M'$ of $M$; namely, this is the total space of the trivial real line bundle over $M'$. The covering map is proper, and it is surjective on top-degree rational Borel--Moore homology, as the Borel--Moore fundamental class of the domain maps (up to sign) to twice the Borel--Moore fundamental class of the target. Hence the total space of the orientation line bundle over $M$ is formal if the total space of the trivial line bundle over $M'$ is formal. The formality of the total space of a vector bundle is equivalent to the formality of its base space, since the inclusion of the zero section is a homotopy equivalence. \end{proof}

\noindent \emph{Proof of \Cref{universal}}. Indeed, there is some finite cover of our manifold $M$ which is of the form $T^q \times \hat{M}$ for a simply connected $\hat{M}$. Since $T^q$ is formal and a product of two spaces is formal if and only if each factor is formal, the formality of this cover is equivalent to the formality of $\hat{M}$, which is equivalent to the formality of the universal cover $\mathbb{R}^q \times \hat{M}$. Hence, if the universal cover is formal, $T^q \times \hat{M}$ is formal. Then by \Cref{formaldomination}, if $M$ is orientable, it is formal. If $M$ is non-orientable, we consider its orientable double cover $M'$ endowed with the pullback metric. The universal cover of $M'$ coincides with that of $M$. Therefore, by the above, we conclude that if the universal cover of $M$ is formal, then $M'$ is formal, and hence $M$ is formal by \Cref{doublecoverlemma}. \qed

\begin{rem} As a consequence, we recover the well-known fact that non-toral nilmanifolds do not admit metrics with non-negative Ricci curvature, as they are not formal. More generally, any non-formal aspherical closed Riemannian manifold has a direction of negative Ricci curvature. \end{rem}

We reobtain the result that closed orientable Riemannian manifolds with non-negative Ricci curvature and sufficiently large first Betti number are formal, originally due to Fiorenza--Kawai--L\^{e}--Schwachh\"ofer \cite[Corollary 5.4]{FKLS21}. They prove the result more generally for closed orientable manifolds all of whose harmonic 1-forms are parallel, using their theory of Poincar\'e differential algebras of Hodge type and a generalization of the Cheeger--Gromoll splitting theorem proved therein. In the case of non-negative Ricci curvature, we can remove the orientability hypothesis. Below we will also consider cases in which one can remove the closedness assumption on the manifold.

\begin{prop}(cf. \cite[Corollary 5.4]{FKLS21})\label{largeb1closed} \ A connected closed (not necessarily orientable) manifold $M$ of dimension $n$ with $b_1 \geq n-6$, which admits a metric of non-negative Ricci curvature, is formal. \end{prop}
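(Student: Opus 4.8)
The plan is to reduce, via \Cref{universal}, to checking that the universal cover of $M$ is formal, and then to use the Cheeger--Gromoll splitting together with the hypothesis $b_1 \geq n-6$ to identify the universal cover (up to a Euclidean factor) as a closed simply connected manifold of dimension at most $6$, which is formal by a classical theorem of Miller.

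In detail, I would first invoke the Cheeger--Gromoll splitting theorem quoted above to obtain a finite covering $p \colon \tilde M \to M$ with $\tilde M$ diffeomorphic to $\hat M \times T^q$, where $\hat M$ is closed and simply connected. The universal cover of $M$ is then $\hat M \times \mathbb{R}^q$, which is formal precisely when $\hat M$ is, since a product is formal if and only if each factor is and $\mathbb{R}^q$ is contractible. By \Cref{universal} it therefore suffices to prove that $\hat M$ is formal.

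Next I would bound $\dim \hat M$. Since $p$ is a finite covering, the transfer homomorphism shows that $p^* \colon H^*(M;\mathbb{Q}) \to H^*(\tilde M;\mathbb{Q})$ is injective, so $b_1(M) \leq b_1(\tilde M) = b_1(\hat M) + b_1(T^q) = q$. Combined with the hypothesis $b_1(M) \geq n-6$, this gives $q \geq n-6$, hence $\dim \hat M = n - q \leq 6$. Finally, $\hat M$ is a closed simply connected manifold of dimension at most $6$, and every such manifold is formal by Miller's theorem that a compact $(r-1)$-connected manifold of dimension at most $4r-2$ is formal (applied with $r=2$). Thus $\hat M$ is formal, so the universal cover of $M$ is formal, and $M$ is formal by \Cref{universal}.

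There is no serious obstacle here; the two points to get right are the direction of the Betti-number comparison (one needs $b_1(M) \leq b_1(\tilde M)$, which is exactly what the transfer for a finite cover provides), and the observation that the constant $6$ is precisely the range in which simple connectivity forces formality of closed manifolds --- so the bound $b_1 \geq n-6$ is calibrated exactly to Miller's inequality, and a sharper formality result for low-dimensional simply connected manifolds would sharpen the hypothesis accordingly. One may also remark that no extra work is needed in the non-orientable case, since \Cref{universal} already incorporates \Cref{doublecoverlemma}.
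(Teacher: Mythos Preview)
Your proof is correct and follows essentially the same route as the paper: bound $\dim \hat M \leq 6$ from $q \geq b_1(M)$, invoke Miller's theorem, and then apply \Cref{universal}. The only difference is that you spell out the inequality $b_1(M) \leq q$ via the transfer for the finite cover, whereas the paper states $q \geq b_1(M)$ without further comment.
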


\begin{proof} By the Cheeger--Gromoll splitting theorem, the universal cover of $M$ is $\hat{M} \times \mathbb{R}^q$, where $q \geq b_1(M)$ and $\hat{M}$ is a closed simply connected manifold. Since $b_1(M) \geq n-6$, the dimension of $\hat{M}$ is at most six. Hence $\hat{M}$ is formal \cite{M79}, and so $\hat{M} \times \mathbb{R}^q$ is formal. Now by \Cref{universal}, we conclude that $M$ is formal. \end{proof}

In this generality, \Cref{largeb1closed} is sharp, as there exists a simply connected closed non-formal seven--manifold with positive Ricci curvature (in fact, admitting a positive Einstein metric), see \cite[manifold $Q(1,1,1)$ on p.2, p.5]{FFKM23}. Cases of \Cref{largeb1closed} with larger bounds on $b_1$ had been known earlier. Indeed, as was known to Bochner, a closed Riemannian manifold with non-negative Ricci curvature has $b_1 \leq n$, and if $b_1 = n$, it is isometric to a flat torus, which is formal.  Furthermore, Kotschick proved that a closed orientable $n$--manifold with a non-negative Ricci curvature metric and $b_1 = n-2$ is in fact geometrically formal \cite[Proposition 16]{Ko17}, i.e. it admits a metric wherein the harmonic forms are closed under wedge product, implying formality. As remarked in loc. cit., if $b_1 \geq n-1$ on a closed orientable $n$-manifold with a non-negative Ricci curvature metric, then in fact $b_1 = n$. 

\vspace{1em}

It is somewhat surprising that in the presence of non-negative Ricci curvature, a large first Betti number ensures formality, since often in other scenarios one hopes to argue formality using instead a scarcity of cohomology (as in e.g. \cite{M79}). 

\subsection*{Compact manifolds with boundary}

For a compact manifold $M$ with boundary $\partial M$, denote by $D(M)$ its double, i.e. the closed manifold obtained by gluing two copies of $M$ by the identity along $\partial M$. If $M$ is oriented, then putting the opposite orientation on the second copy of $M$ induces an orientation on $D(M)$. We partially extend \Cref{largeb1closed} to compact manifolds with boundary. We made use of the observation that the natural ramified double cover $D(M) \to M$ is a retraction for the inclusion $M \hookrightarrow D(M)$.

\begin{lem}\label{firstbetti} We have $b_1(D(M)) \geq b_1(M)$. Furthermore, if $D(M)$ is formal, then $M$ is formal. \end{lem}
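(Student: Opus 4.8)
The plan is to work directly with the folding map. Write $D(M)=M_1\cup_{\partial M}M_2$ for the two copies of $M$, let $\iota\colon M\hookrightarrow D(M)$ be the inclusion of $M$ as $M_1$, and let $r\colon D(M)\to M$ be the folding map, which restricts to the canonical identification on each of $M_1$ and $M_2$. Then $r$ is continuous with $r\circ\iota=\mathrm{id}_M$, so $M$ is a retract of $D(M)$; this is the observation recorded before the statement. (In collar coordinates $r$ is $(x,t)\mapsto(x,|t|)$, hence not smooth along $\partial M\subset D(M)$, but only continuity is needed below --- or, if one prefers to argue with differential forms rather than with $A_{PL}$, one may replace $r$ by a homotopic smooth map equal to $r$ off a collar of $\partial M$.) For the first assertion I would simply apply rational cohomology to the retract diagram: $\iota^{*}\circ r^{*}=\mathrm{id}$ on $H^{*}(M;\mathbb{Q})$, so $r^{*}\colon H^{1}(M;\mathbb{Q})\to H^{1}(D(M);\mathbb{Q})$ is injective, giving $b_1(D(M))\geq b_1(M)$ (and in fact $b_i(D(M))\geq b_i(M)$ in every degree).

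For the formality statement the mechanism is that a retract of a formal space is formal. Applying the polynomial de Rham functor $A_{PL}$ to the retract diagram gives commutative differential graded algebras and maps $A_{PL}(M)\xrightarrow{\,r^{*}\,}A_{PL}(D(M))\xrightarrow{\,\iota^{*}\,}A_{PL}(M)$ with $\iota^{*}r^{*}=\mathrm{id}$, exhibiting $A_{PL}(M)$ as a retract of $A_{PL}(D(M))$. Since $D(M)$ is formal, $A_{PL}(D(M))$ is isomorphic to $(H^{*}(D(M);\mathbb{Q}),0)$ in the homotopy category of commutative differential graded algebras, so $A_{PL}(M)$ is a retract of $(H^{*}(D(M);\mathbb{Q}),0)$ there. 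To conclude, I would build the formality quasi-isomorphism for $M$ out of this retract structure: transport $r^{*}$ along the formality isomorphism to get a map $A_{PL}(M)\to(H^{*}(D(M);\mathbb{Q}),0)$, and then post-compose with the honest differential graded algebra map $(H^{*}(D(M);\mathbb{Q}),0)\to(H^{*}(M;\mathbb{Q}),0)$ induced by the ring homomorphism $H^{*}(\iota)$. On cohomology this composite equals $H^{*}(\iota)\circ H^{*}(r)=\mathrm{id}_{H^{*}(M;\mathbb{Q})}$, so it is a quasi-isomorphism from $A_{PL}(M)$ onto its cohomology algebra, and $M$ is formal.

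Note that this lemma uses no curvature hypothesis, no dimension bound, and --- unlike \Cref{formaldomination} --- no orientability; it is the purely topological ingredient of the geometric applications. The step that requires care is the retract-closure of formality. It is natural to hope to deduce it from the formal domination theorem applied to $r\colon D(M)\to M$, but that approach does not work: $M$ has non-empty boundary, so its top rational (Borel--Moore) homology vanishes and there is no fundamental class for $r_{*}$ to hit, while passing to the open manifold $\operatorname{int}(M)$ does not help either, since no proper map out of the compact manifold $D(M)$ can carry a class onto the (non-compactly supported) Borel--Moore fundamental class of $\operatorname{int}(M)$. So one genuinely needs the direct argument above; equivalently, one may invoke the standard fact that homotopy retracts of formal spaces are formal.
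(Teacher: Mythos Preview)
Your proof is correct and follows the same route as the paper: both use the folding retraction $r\colon D(M)\to M$ to get the Betti number inequality and then invoke the fact that a retract of a formal space is formal. The only difference is that you spell out a direct homotopy-category argument for the latter, whereas the paper simply cites it (\cite[Example 2.88]{FOT08} or \cite[Theorem B]{MSZ23}). One small correction to your closing commentary: you are right that \Cref{formaldomination} (which is \cite[Theorem A]{MSZ23}) does not apply because $M$ has boundary, but it is not the case that one must avoid \cite{MSZ23} altogether---the paper invokes \cite[Theorem B]{MSZ23}, which covers precisely the retract situation.
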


\begin{proof} The homology of $D(M)$ surjects onto that of $M$, as $D(M)$ retracts onto $M$. Furthermore, the retract of a formal space is formal (see \cite[Example 2.88]{FOT08}, or apply \cite[Theorem B]{MSZ23}).  \end{proof}

%\begin{lem}\label{retractformal} If $D(M)$ is formal, then $M$ is formal. \end{lem}

%\begin{proof} The retract of a formal space is formal (see \cite[Example 2.88]{FOT08}, or apply \cite[Theorem B]{MSZ23}).  \end{proof}

\begin{cor}\label{compactwithboundary} Let $M$ be a connected compact $n$--manifold with boundary $\partial M$, with $b_1(M) \geq n-6$. If $M$ admits a Riemannian metric with positive Ricci curvature in which $\partial M$ is strictly convex (i.e. the second fundamental form along the boundary is positive definite), then $M$ is formal. \end{cor}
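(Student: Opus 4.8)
The plan is to deduce \Cref{compactwithboundary} from the closed case, \Cref{largeb1closed}, by passing to the double $D(M)$ and using \Cref{firstbetti} in both directions. \Cref{firstbetti} already gives $b_1(D(M)) \geq b_1(M) \geq n-6 = \dim D(M) - 6$ and tells us that formality of $D(M)$ forces formality of $M$. Hence the entire problem reduces to exhibiting, on the closed $n$-manifold $D(M)$, a Riemannian metric of non-negative Ricci curvature; then \Cref{largeb1closed} applies verbatim to conclude that $D(M)$, and with it $M$, is formal.

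The one substantive ingredient is therefore a doubling theorem: \emph{if $(M,g)$ is compact with $\mathrm{Ric}_g > 0$ and strictly convex boundary, then $D(M)$ admits a smooth metric with $\mathrm{Ric} \geq 0$.} The obvious reflection-symmetric metric on $D(M)$ --- two copies of $(M,g)$ glued along $\partial M$ --- is smooth on the interior of each copy but only Lipschitz across the seam $\Sigma = \partial M$, with a corner in the direction normal to $\Sigma$. In Fermi coordinates $dt^2 + g_t$ near $\Sigma$, with $t$ the signed distance to $\Sigma$, the jump of $\partial_t g_t$ at $t=0$ is controlled by the second fundamental form of $\partial M$, so that in the sense of distributions the Ricci curvature of the doubled metric picks up a singular term along $\Sigma$ whose sign is that of the second fundamental form; convexity of $\partial M$ makes this term non-negative, and strict convexity makes it positive. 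The plan is to exploit this extra positivity to mollify the metric in a thin collar of $\Sigma$ --- equivalently, to first deform $g$ near $\partial M$ so that $\partial M$ becomes totally geodesic while retaining $\mathrm{Ric} \geq 0$, and then double, the glued metric now being genuinely smooth. I would carry out this collar smoothing either by invoking an existing result on doubling manifolds with convex boundary (such results appear in the literature on building metrics of positive Ricci curvature by gluing) or by an explicit interpolation; checking that the Ricci lower bound survives the smoothing is the crux, and it is precisely here that strict, as opposed to merely weak, convexity of $\partial M$ is needed.

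Granting the doubling theorem, the corollary is immediate: $D(M)$ is a connected closed $n$-manifold carrying a metric with $\mathrm{Ric} \geq 0$ and with $b_1(D(M)) \geq b_1(M) \geq n-6$, so $D(M)$ is formal by \Cref{largeb1closed}, and then $M$ is formal by \Cref{firstbetti}. I expect the doubling-and-smoothing step to be the sole obstacle; the remainder is bookkeeping. (It is worth noting that if the smoothing can be arranged to preserve $\mathrm{Ric} > 0$, then Bonnet--Myers applied to $D(M)$ forces $b_1(M) = 0$, so the hypothesis $b_1(M) \geq n-6$ then has content only for $n \leq 6$; I phrase it as above merely to parallel \Cref{largeb1closed}.)
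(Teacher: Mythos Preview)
Your proposal is correct and follows essentially the same route as the paper: reduce to the closed case via the double, use \Cref{firstbetti} for both the Betti number inequality and the transfer of formality, and invoke a gluing theorem to put a suitable metric on $D(M)$. The paper makes the ``existing result'' you allude to explicit by citing Perelman's gluing construction \cite[Section~4]{Pe97}, which under the hypotheses yields a metric of \emph{positive} Ricci curvature on $D(M)$ (so your Bonnet--Myers remark indeed applies and the interesting range is $n\leq 6$).
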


\begin{proof} Strict convexity of the boundary ensures that the conditions of Perelman's gluing construction \cite[Section 4]{Pe97}  are satisfied by two copies of $M$ (where we identify the boundaries via the identity map); we thus have that $D(M)$ admits a metric of positive Ricci curvature. The double $D(M)$ is a closed $n$--manifold, with $b_1(D(M)) \geq b_1(M) \geq n-6$ by the first part of \Cref{firstbetti}, so it is formal by \Cref{largeb1closed}. Then by the second part of \Cref{firstbetti}, $M$ is also formal. \end{proof}

%If it is orientable, then it is formal by \Cref{largeb1closed}, and so by \Cref{firstbetti}, $M$ is also formal. If $D(M)$ is non-orientable, then we apply \Cref{largeb1closed} to its orientable double cover (which has $b_1 \geq b_1(D(M))$) followed by \Cref{doublecoverlemma}. \end{proof}

In fact, $b_1(D(M))$ is determined by $b_1(M)$ and the dimension of the relative rational homology $H_1(M, \partial M)$, giving us the following generalization:

\begin{cor}\label{cor7} Let $M$ be a connected compact $n$--manifold with boundary $\partial M$, with $b_1(M) \geq n-6-\dim H_1(M, \partial M)$. If $M$ admits a Riemannian metric with positive Ricci curvature in which $\partial M$ is strictly convex, then $M$ is formal. \end{cor}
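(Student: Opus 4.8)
The plan is to deduce \Cref{cor7} from \Cref{compactwithboundary} by computing $b_1(D(M))$ precisely. The identity I would aim for is
\[
  b_1(D(M)) = b_1(M) + \dim H_1(M,\partial M;\mathbb{Q}),
\]
under which the hypothesis $b_1(M) \ge n - 6 - \dim H_1(M,\partial M)$ says exactly that $b_1(D(M)) \ge n-6$. (One may assume $\partial M \ne \emptyset$; if $\partial M = \emptyset$ then $M$ is closed, so $b_1(M)=0$ and the hypothesis forces $n \le 6$, and then $M$ is formal by \Cref{universal} together with \cite{M79} applied to its universal cover.)

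For the identity I would use the retraction $p\colon D(M) \to M$ recorded before \Cref{firstbetti}. Write $D(M) = M_1 \cup_{\partial M} M_2$; identifying $M$ with the copy $M_2$, the map $p$ restricts to the identity on $M_2$, so $p$ is a retraction of $D(M)$ onto the codimension-zero submanifold $M_2$. Therefore, in the long exact sequence of the pair $(D(M),M_2)$ with rational coefficients, the map $H_k(M_2;\mathbb{Q}) \to H_k(D(M);\mathbb{Q})$ is split injective by $p_*$, so the sequence splits into short exact sequences
\[
  0 \to H_k(M;\mathbb{Q}) \to H_k(D(M);\mathbb{Q}) \to H_k(D(M),M_2;\mathbb{Q}) \to 0 .
\]
The relative group on the right is identified, via the collar of $\partial M$ and excision (equivalently, $D(M)/M_2 = M_1/\partial M$), with $H_k(M,\partial M;\mathbb{Q})$. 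Taking dimensions and setting $k=1$ gives the displayed identity; as a byproduct this recovers, and refines, the inequality $b_1(D(M)) \ge b_1(M)$ of \Cref{firstbetti}.

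With the identity established the argument ends as in the proof of \Cref{compactwithboundary}: strict convexity of $\partial M$ lets Perelman's gluing construction \cite[Section 4]{Pe97} be applied to two copies of $M$ glued along $\partial M$ by the identity, so the closed $n$-manifold $D(M)$ admits a metric of positive Ricci curvature; as $b_1(D(M)) = b_1(M) + \dim H_1(M,\partial M) \ge n-6$, \Cref{largeb1closed} gives that $D(M)$ is formal; and the second part of \Cref{firstbetti}, that formality descends to retracts, then gives that $M$ is formal.

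The only genuinely non-bookkeeping point is the Betti-number identity, and its one potential snag — a disconnected boundary, which complicates a direct Mayer--Vietoris count of $\pi_0(\partial M)$ — is avoided, because $p_*$ is used structurally rather than through a choice of basis. The remaining care needed is the routine verification that $(D(M),M_2)$ is an excisive (equivalently, cofibred) pair, which follows from the collar neighborhood of $\partial M$ in $D(M)$.
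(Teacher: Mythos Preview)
Your proof is correct and reaches the same identity $b_1(D(M)) = b_1(M) + \dim H_1(M,\partial M;\mathbb{Q})$ that the paper proves, but by a genuinely different route. The paper combines the Mayer--Vietoris sequence for $D(M) = M_1 \cup_{\partial M} M_2$ with the long exact sequence of the pair $(M,\partial M)$, tracking the dimensions of $\ker i_0$ and $\ker i_1$ in both and then cancelling; this yields the identity after a short computation. You instead exploit the retraction $p\colon D(M)\to M$ directly: it splits the long exact sequence of the pair $(D(M),M_2)$, and excision identifies the relative term with $H_*(M,\partial M)$, giving in one stroke the additive decomposition $H_k(D(M);\mathbb{Q}) \cong H_k(M;\mathbb{Q}) \oplus H_k(M,\partial M;\mathbb{Q})$ for every $k$.

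Your approach is slightly more efficient: it avoids the kernel bookkeeping entirely, sidesteps any $\pi_0(\partial M)$ count, and yields the full graded splitting rather than only the degree-one Betti identity. The paper's Mayer--Vietoris argument, on the other hand, is perhaps the more expected first move and does not require spotting that the retraction splits a long exact sequence. Both finish identically, by feeding $b_1(D(M)) \ge n-6$ into the argument of \Cref{compactwithboundary}.
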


\begin{proof} Denote by $i$ the inclusion $\partial M \hookrightarrow M$. For simplicity, we denote the induced map on homology by $i$ as well. From the Mayer--Vietoris long exact sequence in rational homology $$ \cdots \to H_*(\partial M) \to H_*(M) \oplus H_*(M) \to H_*(D(M)) \to H_{*-1}(\partial M) \to \cdots $$ we obtain the exact sequence $$0 \to \ker i_{*} \to H_*(\partial M) \to H_*(M) \oplus H_*(M) \to H_*(D(M)) \to \ker i_{*-1} \to 0,$$ giving us $$2b_1(M) = b_1(\partial M) - \dim \ker i_{1} - \dim \ker i_{0} + b_1(D(M)),$$ where $\ker i_{0}$ and $\ker i_{1}$ denote the kernels of the maps induced by $i$ in degrees 0 and 1, respectively. On the other hand, from the long exact sequence in rational homology for the pair $(M, \partial M)$ we obtain the exact sequence $$0 \to \ker i_{*} \to H_*(\partial M) \to H_*(M) \to H_*(M, \partial M) \to \ker i_{*-1} \to 0,$$ giving us $$b_1(M) = b_1(\partial M) - \dim \ker i_{1} - \dim \ker i_{0} + \dim H_1(M, \partial M).$$ 

Combining these two equalities we obtain $b_1(D(M)) - b_1(M) = \dim H_1(M, \partial M)$, and we apply \Cref{compactwithboundary}. \end{proof}

\subsection*{Complete open manifolds} We also have the following analogue of \Cref{universal}:

\begin{thm}\label{universalsectional} A connected manifold, without boundary, admitting a complete metric with non-negative sectional curvature is formal if its universal covering space is formal. \end{thm}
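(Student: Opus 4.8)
\emph{Proof proposal.} The plan is to reduce to the closed case treated in \Cref{universal} by means of the Cheeger--Gromoll soul theorem. First I would recall that a connected complete manifold $M$ without boundary and with non-negative sectional curvature is diffeomorphic to the total space of the normal bundle of its soul $S$, a compact connected totally geodesic submanifold (\cite{CG72}). In particular, the zero-section inclusion $S \hookrightarrow M$ is a homotopy equivalence. Since $S$ is totally geodesic, it inherits a metric of non-negative sectional curvature, hence \emph{a fortiori} of non-negative Ricci curvature; thus $S$ is a connected closed manifold to which \Cref{universal} applies.

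Next I would match up the universal covers. As $S \hookrightarrow M$ is a homotopy equivalence, $\pi_1(M) \cong \pi_1(S)$, and the universal cover $\widetilde{M}$ is the total space of the pullback of the normal bundle of $S$ along the universal covering $\widetilde{S} \to S$. This pullback is again a vector bundle over $\widetilde{S}$, and since the inclusion of the zero section of a vector bundle is a homotopy equivalence --- the same observation used in the proof of \Cref{doublecoverlemma} --- we get $\widetilde{S} \simeq \widetilde{M}$. Consequently, if $\widetilde{M}$ is formal, then so is $\widetilde{S}$, which is precisely the universal cover of the closed manifold $S$.

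Finally, I would apply \Cref{universal} to $S$: formality of $\widetilde{S}$ implies formality of $S$, and hence of the homotopy equivalent space $M$. The only points that need care are the verification that the soul carries a non-negative Ricci curvature metric --- immediate, since a totally geodesic submanifold of a non-negatively curved manifold is non-negatively curved --- and the identification $\widetilde{M} \simeq \widetilde{S}$ of universal covers; I expect the latter bookkeeping, rather than any genuinely new difficulty, to be the main thing to get right, as everything else is a direct citation of the soul theorem and \Cref{universal}.
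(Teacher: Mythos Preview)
Your proposal is correct and follows essentially the same approach as the paper: invoke the soul theorem to get a closed totally geodesic soul $S$ with $M$ diffeomorphic to the total space of its normal bundle, identify the universal cover of $M$ with the pullback bundle over $\widetilde{S}$ (so that $\widetilde{S}\simeq\widetilde{M}$), and then apply \Cref{universal} to the closed non-negatively curved manifold $S$. The only cosmetic difference is that the paper phrases the universal-cover identification by writing $\pi^*\nu \to \nu$ as the universal cover of $\nu$, whereas you spell out the $\pi_1$ isomorphism first; the content is identical.
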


\begin{proof} By the soul theorem \cite{CG72}, there is a closed totally geodesic submanifold $S$ of $M$ such that the total space of its normal bundle $\nu$ in $M$ is diffeomorphic to $M$. Since $S$ is totally geodesic, its induced metric also has non-negative sectional curvature. 

Denote by $\hat{S} \xrightarrow{\pi} S$ the universal cover of $S$. Then $\pi^*\nu \to \nu$ is the universal cover of (the total space of) $\nu$. If the universal cover of $M$ is formal, i.e. if $\pi^*\nu$ is formal, then so is $\hat{S}$, as the inclusion of $\hat{S}$ into $\pi^*\nu$ is a homotopy equivalence. Since $S$ is closed and has non-negative Ricci curvature, we conclude by \Cref{universal} that $S$ is formal. Therefore, the total space of $\nu$ is formal, and so $M$ is formal as well.\end{proof}

For example, using the Killing--Hopf theorem, we thus see that every non-negative curvature space form is formal. We immediately have the following analogue to \Cref{largeb1closed}:

\begin{cor}\label{open} Let $M$ be a connected open $n$--manifold with $b_1 \geq n-7$. If $M$ admits a complete Riemannian metric with non-negative sectional curvature, then it is formal. \end{cor}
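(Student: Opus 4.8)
The plan is to reduce, via the soul theorem, to the closed case already settled in \Cref{largeb1closed}. First I would invoke the soul theorem exactly as in the proof of \Cref{universalsectional}: $M$ is diffeomorphic to the total space of the normal bundle $\nu$ of a closed totally geodesic submanifold $S \subseteq M$, and the metric induced on $S$ has non-negative sectional curvature, in particular non-negative Ricci curvature. The zero section $S \hookrightarrow \nu$ is a homotopy equivalence, so $M$ is homotopy equivalent to $S$; hence $b_1(S) = b_1(M) \geq n-7$. The one new ingredient beyond the closed case is a codimension bound: since $M$ is open (non-compact, without boundary), the bundle $\nu$ must have positive rank --- otherwise $M$ would equal the compact manifold $S$ --- so $\dim S \leq n-1$.

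Combining these, $S$ is a closed manifold carrying a metric of non-negative Ricci curvature with $b_1(S) = b_1(M) \geq n-7 \geq (\dim S) - 6$, so \Cref{largeb1closed} applies and $S$ is formal. Since formality depends only on the (rational) homotopy type and $M$ is homotopy equivalent to $S$, we conclude that $M$ is formal. Equivalently, one can route the argument through \Cref{universalsectional}: the universal cover of $M$ is the total space of the pullback of $\nu$ to the universal cover $\tilde{S}$ of $S$, hence homotopy equivalent to $\tilde{S}$; by the Cheeger--Gromoll splitting theorem $\tilde{S} \cong F \times \mathbb{R}^q$ with $F$ closed simply connected and $q \geq b_1(S)$, so $\dim F = \dim S - q \leq (n-1) - b_1(M) \leq 6$, whence $F$ --- and thus the universal cover of $M$ --- is formal by \cite{M79}, and \Cref{universalsectional} yields formality of $M$.

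I do not expect a genuine obstacle here; the proof is a short adaptation of that of \Cref{largeb1closed}. The only point deserving care is the codimension estimate $\dim S \leq n-1$, which is precisely what weakens the Betti number hypothesis from $b_1 \geq n-6$ in the closed case to $b_1 \geq n-7$ in the open case: when $\dim S = n-1$ and $b_1(M) = n-7$, the simply connected factor $F$ of $\tilde{S}$ has dimension exactly $6$, sitting at the boundary of Miller's formality range.
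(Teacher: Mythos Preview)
Your proposal is correct and follows exactly the route the paper intends: the paper leaves the proof implicit, noting only that ``the bound $b_1 \geq n-7$ comes from the fact that any soul of $M$ has dimension at most $n-1$,'' and your argument via the soul theorem and \Cref{largeb1closed} fills in precisely those details. The alternative route through \Cref{universalsectional} that you sketch is equally valid and amounts to unwinding the same ingredients.
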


The bound $b_1 \geq n-7$ comes from the fact that any soul of $M$ has dimension at most $n-1$.

\begin{rem} Throughout the above, instead of invoking the result of formality being preserved under domination \cite{MSZ23}, one could also wish to invoke a statement of the form ``if $X$ is a formal space, acted on by a finite group $G$, then $X/G$ is formal as well''. In the case of a Galois covering $Y' \to Y$, that would then give that formality of $Y'$ implies formality of $Y$. The Cheeger--Gromoll splitting theorem tells us there is a finite cover of a closed non-negatively curved manifold $M$ which is diffeomorphic to $T^k \times N$, where $N$ is simply connected. Now, this finite cover of $M$ may not be Galois, but some further finite cover is, since every finite index subgroup of a group $G$ contains a finite index subgroup which is normal in $G$. A finite covering of $T^k \times N$ is again of the form $T^k \times N$. Hence we could, without loss of generality, replace any finite cover by a Galois finite cover in our arguments.

However, as far as the author can tell, the only known argument (not using \cite{MSZ23}) that $X$ formal implies $X/G$ formal relies on the result that for a formal space $X$ with a $G$-action for a finite group $G$, there is a zigzag of quasi-isomorphisms between $A_{PL}(X)$ and $H^*(X;\mathbb{Q})$, witnessing the formality of $X$, that is simultaneously $G$-equivariant. This is proven for finite-type spaces $X$ with $b_1 = 0$ in \cite[Corollary 2.10, Corollary 2.11]{Pa82}. In \cite[Remark 3.30]{FOT08} it is stated that the result follows from the results of \cite{Op84}. The proofs of \cite{Op84} deal with so-called \emph{nice} spaces, which have (globally) finitely generated minimal models. In both cases, it is stated that the results of the respective papers can be extended (\cite[comment above Corollary 2.10]{Pa82}, \cite[Remark on p.180]{Op84}) to the generality required here, with proofs omitted. For the sake of completeness and as an illustration of the main result therein, we instead employ \cite{MSZ23}. \end{rem}

\subsection*{Acknowledgements} The author thanks Luc\'ia Mart\'in-Merch\'an and Michael Albanese for very helpful discussions, together with Jonas Stelzig, Martin Markl, and Demetre Kazaras for important references. Furthermore, the author is grateful to Igor Belegradek for suggesting the statement of \Cref{universal} and \Cref{universalsectional} upon reading an earlier version of this note, and to the anonymous referees for helpful comments which improved the exposition and simplified the proof of \Cref{universalsectional}.

%\section*{Declarations}
 
%\subsection*{Ethical Approval} This declaration is not applicable.  

%\subsection*{Funding} 
%The author has no external funding to declare.
 
%\subsection*{Availability of data and materials}
%This declaration is not applicable.


\begin{thebibliography}{}

\bibitem{Be07} Besse, A.L., 2007. \emph{Einstein manifolds}. Springer Science \& Business Media.

%\bibitem[BFMT16]{BFMT16} Biswas, I., Fernández, M., Muñoz, V. and Tralle, A., 2016. \emph{On formality of Sasakian manifolds}. Journal of Topology, 9(1), pp.161-180. %theorem 3.2 here, but doesn't seem to discuss einstein

\bibitem{CG71} Cheeger, J. and Gromoll, D., 1971. \emph{The splitting theorem for manifolds of nonnegative Ricci curvature}. Journal of Differential Geometry, 6(1), pp.119-128.

\bibitem{CG72} Cheeger, J. and Gromoll, D., 1972. \emph{On the structure of complete manifolds of nonnegative curvature}. Annals of Mathematics, 96(3), pp.413-443.

\bibitem{CN20} Crowley, D. and Nordstr\"om, J., 2020. \emph{The rational homotopy type of $(n-1)$‐connected manifolds of dimension up to $5n-3$}. Journal of Topology, 13(2), pp.539-575.

\bibitem{DGMS75} Deligne, P., Griffiths, P., Morgan, J. and Sullivan, D., 1975. \emph{Real homotopy theory of K\"ahler manifolds}. Inventiones mathematicae, 29, pp.245-274.

\bibitem{FFKM23} Fernández, M., Fino, A., Kovalev, A. and Muñoz, V., 2022. \emph{On nearly parallel $G_2 $-manifolds}. arXiv preprint arXiv:2208.13046.

\bibitem{FOT08} Félix, Y., Oprea, J. and Tanré, D., 2008. \emph{Algebraic models in geometry}. OUP Oxford.

\bibitem{FKLS21} Fiorenza, D., Kawai, K., L\^{e}, H.V. and Schwachhöfer, L., 2021. \emph{Almost formality of manifolds of low dimension}. Annali della Scuola Normale Superiore di Pisa. Classe di scienze, 22(1), pp.79-107.

\bibitem{Ko17} Kotschick, D., 2017. \emph{Geometric formality and non-negative scalar curvature}. Pure and Applied Mathematics Quarterly, 13(3), pp.437-451.

%\bibitem[MM22]{MM22} Martín‐Merchán, L., 2022. \emph{A compact non‐formal closed $G_2$ manifold with $b_1=1$}. Mathematische Nachrichten, 295(8), pp.1562-1590.

\bibitem{MSZ23} Milivojevi\'c, A., Stelzig, J. and Zoller, L., 2023. \emph{Formality is preserved under domination}. arXiv preprint arXiv:2306.12364.

\bibitem{M79} Miller, T.J., 1979. \emph{On the formality of $ K-1$ connected compact manifolds of dimension less than or equal to $4 K-2$}. Illinois Journal of Mathematics, 23(2), pp.253-258.

\bibitem{Op84} Oprea, J.F., 1984. \emph{Lifting homotopy actions in rational homotopy theory}. Journal of Pure and Applied Algebra, 32(2), pp.177-190.

\bibitem{Pa82} Papadima, S., 1982. \emph{On the formality of maps}. An. Univ. Timişoara Ser. Ştiinţ. Mat., 20(1-2), pp.30-40.

\bibitem{Pe97} Perelman, G., 1997. \emph{Construction of manifolds of positive Ricci curvature with big volume and large Betti numbers}. Comparison geometry (Berkeley, CA, 1993–94), 30, pp.157--163.

\bibitem{T10} Taylor, L.R., 2010. \emph{Controlling indeterminacy in Massey triple products}. Geometriae Dedicata, 148, pp.371-389.

\end{thebibliography}
\end{document}